\newtheorem{theorem}{Theorem}[section]
\theoremstyle{definition}
\theoremstyle{remark}
\newcommand{\Z}{\mathbb{Z}}
\newcommand{\R}{\mathbb{R}}
\newcommand{\st}{\mid}
\numberwithin{equation}{section}
\begin{document}

\title{Riemann-Roch theory on finite sets}

\author{Rodney James}
\address{Deptartment of Mathematical and Statistical Sciences, University of Colorado, Denver, CO, USA}

\author{Rick Miranda}
\address{Department of Mathematics, Colorado State University, Fort Collins, CO, USA}


\keywords{}

\date{}

\dedicatory{}

\begin{abstract}
In \cite{BN} M. Baker and S. Norine developed a theory of divisors and linear systems on graphs, and proved a Riemann-Roch Theorem for these objects (conceived as integer-valued functions on the vertices).  In \cite{JM09} and \cite{JM11} the authors generalized these concepts to real-valued functions, and proved a corresponding Riemann-Roch Theorem in that setting, showing that it implied the Baker-Norine result.  In this article we prove a Riemann-Roch Theorem in a more general combinatorial setting that is not necessarily driven by the existence of a graph.
\end{abstract}

\maketitle


\section{Introduction}

Baker and Norine showed in~\cite{BN}
that a Riemann-Roch formula holds for an analogue of linear systems
defined on the vertices of finite connected graphs.
There, the image of the graph Laplacian
induces a equivalence relation on the group of \emph{divisors} of the graph,
which are integer-valued functions defined on the set of vertices.
This equivalence relation is the analogue of linear equivalence in the classical algebro-geometric setting.

We showed in~\cite{JM09} that the Baker-Norine result
implies a generalization of the Riemann-Roch formula to edge-weighted graphs,
where the edge weights can be $R$-valued, where $R$ is an arbitrary subring of the reals;
the equivalence relation induced by image of the edge-weighted graph Laplacian
applies equally well to divisors which are $R$-valued functions defined on the set of vertices.
In~\cite{JM11}, we proved our version of the $R$-valued Riemann-Roch theorem from first principles;
this gave an independent proof of the Bake-Norine result as well.

The notion of linear equivalence above
is induced by the appropriate graph Laplacian acting on the group of divisors,
which may be viewed as points in $\Z^{n}$ (the Baker-Norine case) or more generally $R^{n}$,
where $n$ is the number of vertices of the graph.
In this paper, we propose a generalization of this Riemann-Roch formula for graphs
where linear equivalence is induced by a group action on the points of $R^{n}$.  
The setup we will use is as follows.

Choose a subring $R$ of the reals, and fix a positive integer $n$.
Let $V$ be the group of points in $R^{n}$ under component-wise addition.  
If $x \in V$, we we will use the functional notation $x(i)$ to denote the the $i$-th component of $x$. 

For any  $x \in V$, define the \emph{degree} of $x$ as
\[
\deg(x) = \sum_{i=1}^{n} x(i).
\]
For any $d \in R$, define the subset $V_{d} \subset V$ to be
\[ 
V_{d} = \{ x \in V \st \deg(x)=d \}.
\]
Note that the subset $V_0$ is a subgroup;
for any $d$, $V_d$ is a coset of $V_0$ in $V$.

Let $H \subset V_0$ be a subgroup of $V_{0}$
and consider the action on $V$ by $H$ by translation: 
if $h \in H$ and $x \in V$, then $(h,x) \mapsto h+x$.
This action of $H$ on $V$ induces the equivalence relation $x \sim y$ if and only if $x-y \in H$;
or equivalently, $x \sim y$ if and only if there is a $h \in H$ such that $x = (h,y)$.

Fix the parameter $g\in R$, which we call the \emph{genus},
and choose a set $\mathcal{N} \subset V_{g-1}$.
For $x \in V$, define 
\begin{eqnarray*}
x^{+} &=& \max(x,0) \\
x^{-} &=& \min(x,0)
\end{eqnarray*}
where $\max$ and $\min$ are evaluated at each coordinate.
It follows that $x = x^{+} + x^{-}$ and $x^{+} =  -(-x)^{-}$.
We then define the \emph{dimension} of $x \in V$ to be
\[
\ell(x) = \min_{\nu \in \mathcal{N}} \{ \deg((x-\nu)^{+}) \}.
\]
We will discuss the motivation for this definition of dimension in the next section.

We can now state our main result.
\begin{theorem} \label{RR}
Let $V$ be the additive group of points in $R^{n}$ for a subring $R \subset \R$,
and let $H$ be a subgroup of $V_{0}$ that acts by translation on $V$.
Fix $g \in R$.
Suppose $\kappa \in V_{2g-2}$, and $\mathcal{N} \subset V_{g-1}$,
satisfying the symmetry condition
\[
\nu \in \mathcal{N} \Longleftrightarrow \kappa - \nu \in \mathcal{N}.
\]
Then for every $x \in V$,
\[
\ell(x) - \ell(\kappa - x) = \deg(x) - g +1
\]
\end{theorem}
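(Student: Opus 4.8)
The plan is to reduce the identity to a single observation: as $\nu$ ranges over $\mathcal{N}\subset V_{g-1}$, the sum $\deg((x-\nu)^+) + \deg((x-\nu)^-)$ is a constant, equal to $\deg(x)-g+1$. Everything else is bookkeeping with the symmetry hypothesis and with the elementary identities $x = x^+ + x^-$ and $x^+ = -(-x)^-$ recorded just before the theorem.

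First I would rewrite $\ell(\kappa - x)$. By definition,
\[
\ell(\kappa - x) = \min_{\nu \in \mathcal{N}} \deg\bigl((\kappa - x - \nu)^+\bigr).
\]
The symmetry condition says that $\nu \mapsto \kappa - \nu$ is an involution of $\mathcal{N}$, so I may re-index the minimum by replacing $\nu$ with $\kappa - \nu$; then $\kappa - x - \nu$ becomes $\nu - x$. Applying $x^+ = -(-x)^-$ with $x$ replaced by $\nu - x$ gives $(\nu - x)^+ = -\bigl((x-\nu)^-\bigr)$, and since $\deg$ is additive this yields
\[
\ell(\kappa - x) = \min_{\nu \in \mathcal{N}} \Bigl(-\deg\bigl((x-\nu)^-\bigr)\Bigr) = -\max_{\nu \in \mathcal{N}} \deg\bigl((x-\nu)^-\bigr).
\]

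Next I would record the key algebraic fact. Each $\nu \in \mathcal{N}$ satisfies $\deg(\nu) = g - 1$, and $x - \nu = (x-\nu)^+ + (x-\nu)^-$, so additivity of $\deg$ gives
\[
\deg\bigl((x-\nu)^+\bigr) + \deg\bigl((x-\nu)^-\bigr) = \deg(x - \nu) = \deg(x) - g + 1
\]
for \emph{every} $\nu \in \mathcal{N}$. Hence $\deg((x-\nu)^-) = \deg(x) - g + 1 - \deg((x-\nu)^+)$, so maximizing the left-hand side over $\mathcal{N}$ is exactly minimizing $\deg((x-\nu)^+)$; that is,
\[
\max_{\nu \in \mathcal{N}} \deg\bigl((x-\nu)^-\bigr) = \deg(x) - g + 1 - \ell(x).
\]
Substituting into the formula for $\ell(\kappa - x)$ from the previous step gives $\ell(\kappa - x) = \ell(x) - \deg(x) + g - 1$, which rearranges to the stated identity.

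There is no deep obstacle here; the content is entirely in seeing that the $(\cdot)^+/(\cdot)^-$ split preserves degree and that the symmetry condition turns the minimum defining $\ell(\kappa - x)$ into a maximum over the \emph{same} index set $\mathcal{N}$. The only points needing care are getting the signs right when passing between $(\cdot)^+$ and $(\cdot)^-$ and between $\min$ and $\max$, observing that $H$ is not used in this particular formula (its role is to make $\ell$ invariant under the equivalence $\sim$, not in Riemann--Roch itself), and, if $\mathcal{N}$ is allowed to be infinite, reading $\min$ as $\inf$ throughout --- the argument is unaffected since $\deg((x-\nu)^+) \ge 0$ keeps the infimum finite.
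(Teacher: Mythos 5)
Your proof is correct and follows essentially the same route as the paper's: re-index the minimum defining $\ell(\kappa-x)$ via the symmetry condition, then use $\deg((x-\nu)^+)+\deg((x-\nu)^-)=\deg(x)-g+1$ (constant over $\nu\in\mathcal{N}$) to convert the resulting extremum back into $\ell(x)$. Your remarks that $H$ plays no role in the identity itself and that $\min$ should be read as $\inf$ for infinite $\mathcal{N}$ are accurate observations the paper leaves implicit.
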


We will give a proof of Theorem~\ref{RR} in \S 2.
In \S 3, we will give examples of $H$, $\kappa$, and $\mathcal{N}$
(coming from the graph setting)
which satisfy the conditions of Theorem~\ref{RR},
and show how this Riemann-Roch formulation is equivalent to that given in~\cite{JM11}.
Finally in \S 4, we gives examples that do not arise from graphs.


\section{Proof of Riemann-Roch Formula}
The dimension of $x \in V$ 
\[
\ell(x) = \min_{\nu \in \mathcal{N}} \{ \deg((x-\nu)^{+}) \}
\]
can be written as
\[
\ell(x) =\min_{\nu \in \mathcal{N}} \left\{ \sum_{i=1}^{n} \max\{x(i)-\nu(i),0\} \right\}.
\]
If $x(i) \ge \nu(i)$ for each $i$,
$\sum_{i=1}^{n} \max\{x(i)-\nu(i),0\}$ is the \emph{taxicab} distance from $x$ to $\nu$.
Thus, $\ell(x)$ is the taxicab distance from $x$
to the portion of the set $\mathcal{N}$ such that $x \ge \mathcal{N}$,
where the inequality is evaluated at each component.  

We will now proceed with the proof of the Riemann-Roch formula.
\begin{proof}{(Theorem~\ref{RR})}

Suppose that $\mathcal{N} \subset V_{g-1}$ and $\kappa \in V$
satisfy the symmetry condition.
We can then write
\begin{eqnarray*}
\ell(\kappa - x) &=& \min_{\nu \in N  } \{\deg((\kappa-x - \nu)^{+}) \} \\
                 &=& \min_{\nu \in N }  \{\deg(((\kappa-\nu) -x )^{+}) \} \\
                 &=& \min_{\mu \in N} \{ \deg((\mu - x)^{+}) \}. 
\end{eqnarray*}
Using the identities $x = x^{+} + x^{-}$ and $x^{+} = -(x^{-})$, we have
\begin{eqnarray*}                 
\min_{\mu \in N} \{ \deg((\mu - x)^{+}) \} &=& \min_{\mu \in N} \{ \deg((\mu - x)) - \deg((\mu - x)^{-}) \} \\
                 &=& \min_{\mu \in N} \{ \deg((\mu - x)) + \deg((x-\mu)^{+}) \}.
\end{eqnarray*}
Since $\mu \in \mathcal{N}$ we know that $\deg(\mu)=g-1$, thus $\deg(\mu - x) = g-1 - \deg(x)$ and thus
\begin{eqnarray*}
\ell(\kappa - x) &=&\deg((\mu - x))+\min_{\mu \in N} \{  \deg((x-\mu)^{+}) \} \\
                 &=& g-1 - \deg(x) + \ell(x).
\end{eqnarray*}
\end{proof}
Note that $\kappa \in V$ is the analogue to the canonical divisor in the classical Riemann-Roch formula.


\section{Graph Examples}

Let $\Gamma$ be a finite, edge-weighted connected simple graph with $n$ vertices.
We will assume that $\Gamma$ has no loops.
Let $w_{ij} \in R$ with $w_{ij} \ge 0$ be the weight of the edge connecting vertices $v_{i}$ and $v_{j}$.
The no loops assumption is also applied to the edge weights so that $w_{ii}=0$ for each $i$.
We showed in~\cite{JM11} that such a graph satisfies an equivalent Riemann-Roch formula as in Theorem~\ref{RR}.  

In this setting, $H = < h_{1}, h_{2}, \ldots, h_{n-1} > $ where each $h_{i} \in R^{n}$ is defined as 
\[ 
h_{i}(j) = \left\{ \begin{array}{ll} \deg(v_{i}) & \mbox{if } i = j \\ -w_{ij} & \mbox{if } i \ne j. \end{array} \right.
\]
(Here $\deg(v)$ for a vertex $v$ is the sum of the weights of the edges incident to $v$.)
Note that $H$ is the edge-weighted Laplacian of $\Gamma$. 

As shown in~\cite{JM11}, the set $\mathcal{N} \subset V_{g-1}$
is generated by a set $\{\nu_{1}, \ldots, \nu_{s}\}$ as follows.
Fix a vertex $v_{k}$ and let $(j_{1}, \ldots, j_{n})$ be a permutation of $(1, \ldots, n)$ such that
 $j_{1}=k$.
There are then $(n-1)!$ such permutations;
for each permutation, we compute a $\nu \in V_{g-1}$ defined by
\[
\nu(j_{l}) = \left\{ \begin{array}{ll} -1 & \mbox{if } l=1 \\ 
-1 + \sum_{i=1}^{l-1} w_{j_{i}j_{l}} & \mbox{if } l >1. \end{array} \right. 
\]
Each such $\nu$ may not be unique;
set $s$ to be the number of unique $\nu$'s
and index this set $\{\nu_{1}, \ldots, \nu_{s}\}$.
We then define the set $\mathcal{N}$ as
\[
\mathcal{N} = \{ x \in V \st x \sim \nu_{i} \mbox{ for some } i=1, \ldots s \}.  
\]

The canonical element $\kappa$ is defined by $\kappa(j) = \deg(v_{j})-2$, 
and the genus $g = 1 + \sum_{i<j} w_{ij} - n$.

As an example, consider a two vertex graph $\Gamma$ with edge weight $w_{12} = p >0$.
Then $g=p-1$ and $H = < (p,-p)>= \Z (p,-p)$. 
The set $\mathcal{N} \subset V_{g-1}$ is 
$\mathcal{N}= \{ \nu \st \nu \sim (p-1,-1) \}$ and $\kappa = (p-2,p-2)$. 
Figure~\ref{fig1} shows the divisors $x \in \R^{2}$ for this graph in the plane.
The shaded region, which is bounded
by the corner points in the set $\mathcal{N}$, represent points $x$ with $\ell(x)=0$.  
\begin{figure}
\includegraphics[width=3in]{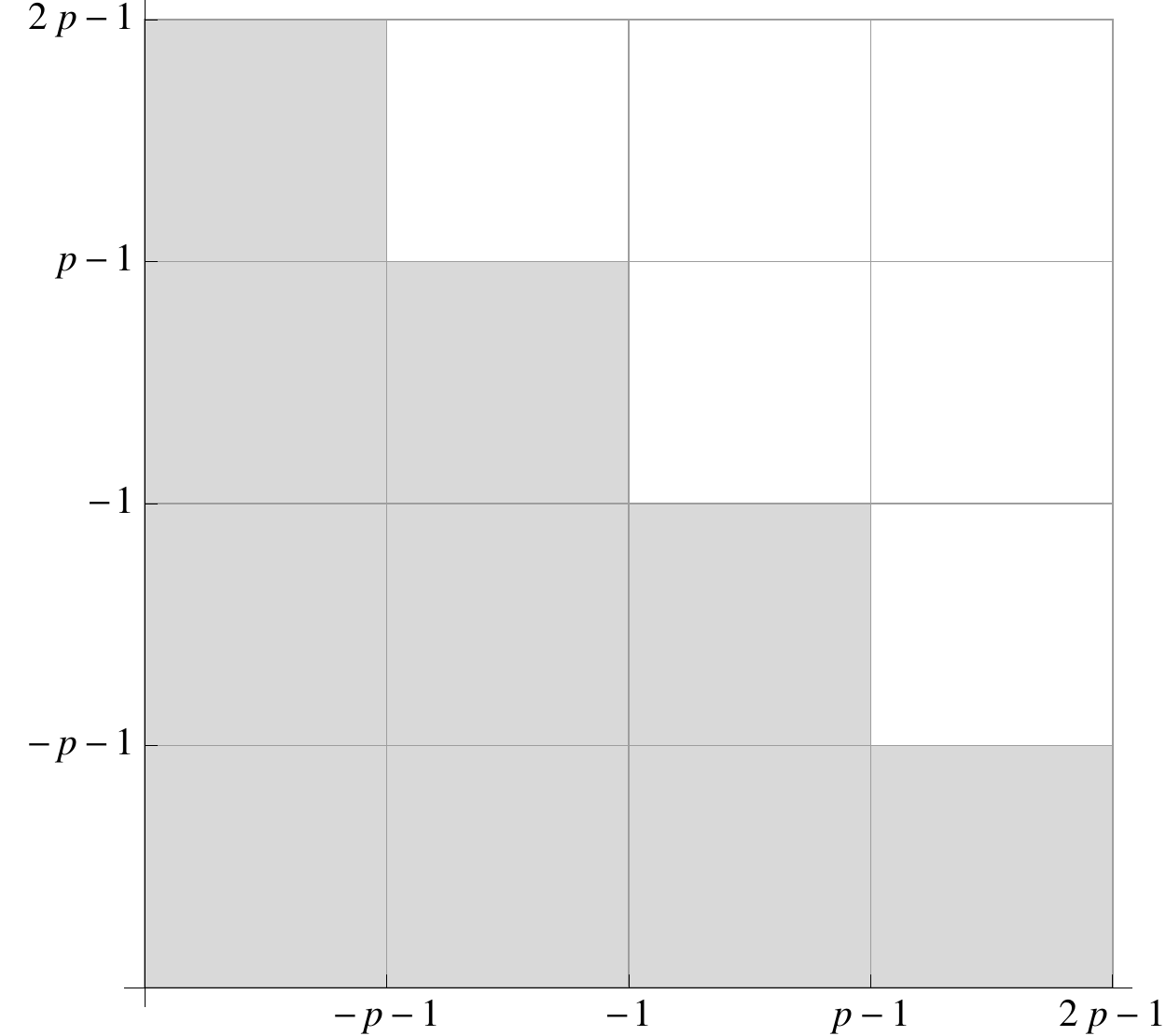}
\caption{Divisors in $\R^{2}$ for a two-vertex graph with $p$ edges.  The shaded region represents points $x \in \R^{2}$ with $\ell(x)=0$; for a general point $x \in \R^{2}$, $\ell(x)$ is the taxicab distance to the shaded region.} \label{fig1}
\end{figure}

We can show directly that $\mathcal{N}$ and $\kappa$ for the two-vertex graph example satisfy the necessary condition for Theorem~\ref{RR} to hold.
If $\kappa -x \in \mathcal{N}$, then $\kappa -x = m(p,-p) + (p-1,-1)$ for some $m \in \Z$.
Solving for $x$, we have 
\begin{eqnarray*}
x &=& (p-2,p-2)-m(p,-p)-(p-1,-1) \\
  &=& (-mp-1,mp+p-1) \\
  &=& (p-1,-1) - (m+1)(p,-p)
\end{eqnarray*}
and thus $x \in \mathcal{N}$.
Similarly, if $\nu \in \mathcal{N}$, it easily follows that $\kappa - \nu \in \mathcal{N}$.

Now consider a three vertex graph with edge weights $w_{12}=p$, $w_{13}=q$ and $w_{23}=r$.  The set $\mathcal{N}$
can be generated by $\nu_{1} = (-1,p-1,q+r-1)$ and $\nu_{2}=(-1,p+r-1,q-1)$;
$H$ can be generated by $h_{1}=(p+q,-p,-q)$ and $h_{2}=(-p,p+r,-r)$.  
In Figure~\ref{fig2}, the region representing points $x \in \R^{3}$ such that $\ell(x)=0$ is
shown for a three vertex graph with edge weights $p=1$, $q=3$ and $r=4$.  
\begin{figure}
\includegraphics[width=4in]{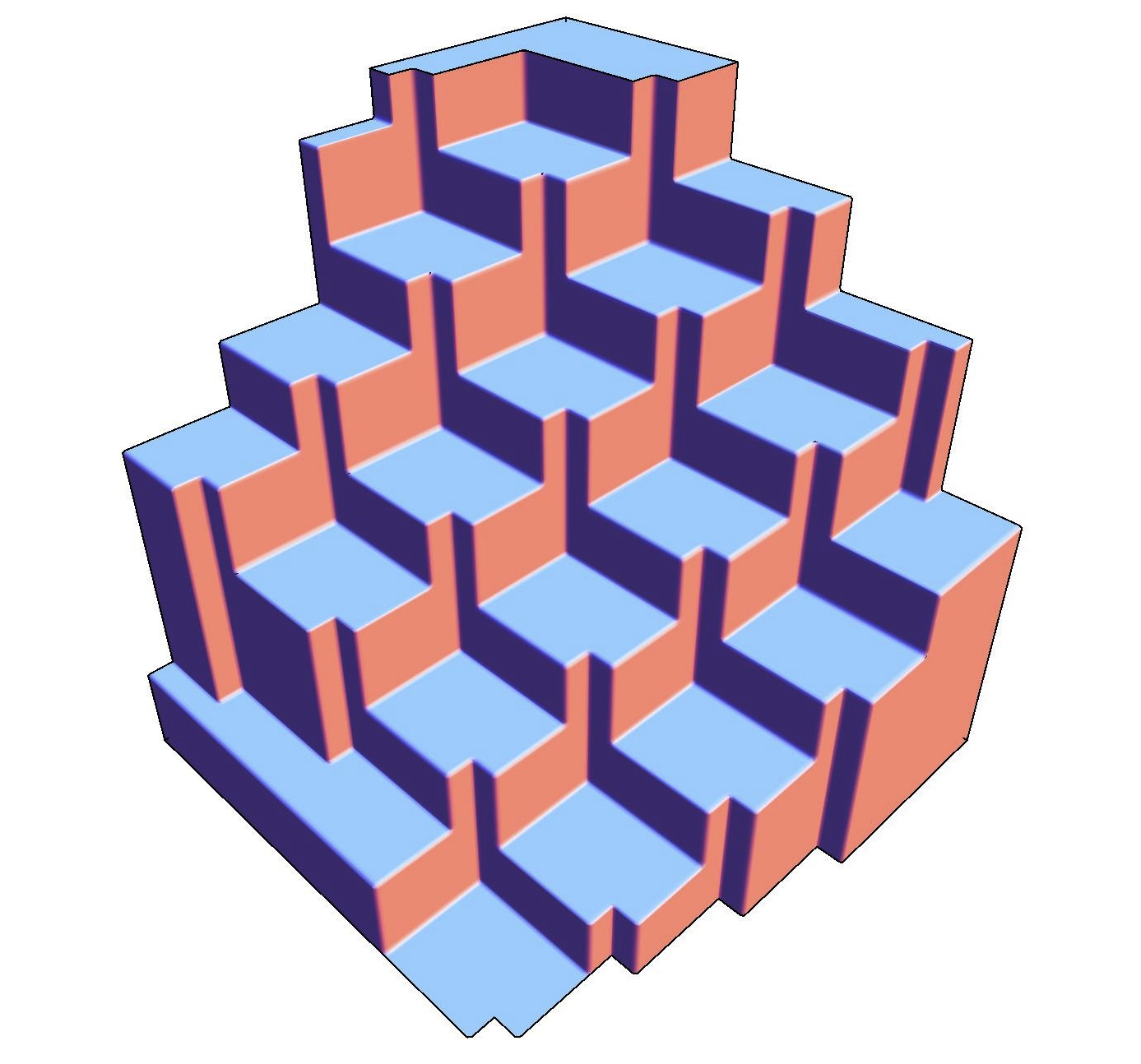}
\caption{Divisors in $\R^{3}$ for a three-vertex graph edge weights $w_{12}=1$,
 $w_{13}=3$ and $w_{23}=4$.  The solid region represents points $x \in \R^{3}$ with $\ell(x)=0$; 
for a general point $x \in \R^{3}$, $\ell(x)$ is the taxicab distance to the surface.
} \label{fig2}
\end{figure}


\section{Non-graph Examples}
The main result of this paper would not be interesting if there were no examples of subgroups $H \subset V_{0}$
with $\mathcal{N}$ and $\kappa$ that were not derived from graphs. 
\begin{theorem} \label{NonGraph}
There are subgroups $H \subset V_{0}$ with $\kappa \in V$ and $\mathcal{N} \subset V_{g-1}$
such that Theorem~\ref{RR}
holds where $H$ is not the Laplacian of a finite connected graph.
\end{theorem}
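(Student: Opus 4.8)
The plan is to exploit the observation that the proof of Theorem~\ref{RR} in \S2 makes no use whatsoever of the subgroup $H$: only the symmetry condition on the pair $(\kappa,\mathcal{N})$ and the degree constraints $\kappa\in V_{2g-2}$, $\mathcal{N}\subset V_{g-1}$ are invoked. So to prove Theorem~\ref{NonGraph} it suffices to exhibit a single triple $(H,\kappa,\mathcal{N})$ in which $H\subset V_{0}$ is not the Laplacian of any finite connected graph on $n$ vertices, but for which $\kappa$ and $\mathcal{N}$ still satisfy the hypotheses of Theorem~\ref{RR} (and, if one wants the example to mirror the graph case of \S3 as closely as possible, for which $\mathcal{N}$ is a union of $H$-equivalence classes).

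First I would record a construction of admissible $(\kappa,\mathcal{N})$ for an \emph{arbitrary} subgroup $H\subset V_{0}$. Fix any $g\in R$; since $(d,0,\ldots,0)\in V_{d}$ for every $d\in R$, the cosets $V_{2g-2}$ and $V_{g-1}$ are nonempty, so pick $\kappa\in V_{2g-2}$ and a nonempty set $S\subset V_{g-1}$ that is a union of finitely many $H$-orbits. Put
\[
\mathcal{N}=S\cup(\kappa-S).
\]
Then $\deg(\kappa-s)=(2g-2)-(g-1)=g-1$, so $\mathcal{N}\subset V_{g-1}$; since $-H=H$ the set $\kappa-S$ is again $H$-invariant, so $\mathcal{N}$ is a union of $H$-classes; and the involution $\nu\mapsto\kappa-\nu$ interchanges $S$ and $\kappa-S$, so $\nu\in\mathcal{N}\iff\kappa-\nu\in\mathcal{N}$. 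Hence Theorem~\ref{RR} applies to $(H,\kappa,\mathcal{N})$ regardless of the choice of $H$.

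Next I would pin down an $H$ that is provably not a graph Laplacian. For a finite connected weighted graph on $n$ vertices the generators $h_{1},\ldots,h_{n-1}$ of $H$ are $\R$-linearly independent, so such an $H$ is free abelian of rank exactly $n-1$; therefore any subgroup of $V_{0}$ of rank $\ne n-1$ does the job --- for instance $H=\{0\}$ when $n\ge 2$, or, taking $R=\Q$ and $n=2$, the subgroup $H=V_{0}=\Q\cdot(1,-1)$, which is not finitely generated. Combining such an $H$ with the data from the previous step gives a triple satisfying all the hypotheses, hence the conclusion, of Theorem~\ref{RR} with $H$ not a connected graph Laplacian, which is exactly the content of Theorem~\ref{NonGraph}. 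The step I expect to be genuinely delicate is not needed for the statement as written, but is the natural next question: producing an $H$ of the ``correct'' rank $n-1$ that is still not the Laplacian of any weighted graph (e.g.\ the image lattice of a directed-graph Laplacian, of a higher-dimensional boundary map, or a suitably generic sublattice of $V_{0}\cap\Z^{n}$). For such an $H$ one would have to give an actual non-realizability argument --- showing that no symmetric matrix with zero row sums and nonpositive off-diagonal entries has the prescribed image --- rather than the rank count used above, and that is where the real work would lie.
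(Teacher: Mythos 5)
Your proposal is correct, but it takes a genuinely different route from the paper's. The paper fixes $n=2$ and exhibits one concrete triple: $H=\langle(-4,4)\rangle$, $\kappa=(0,0)$, $\mathcal{N}=\{\nu \st \nu\sim(2,-2)\}$ with $g=1$; it verifies the symmetry condition by direct computation with orbit representatives, and argues ``non-graph-ness'' by checking that $\kappa$ and the generator of $\mathcal{N}$ are not equivalent to the values $(2,2)$ and $(3,-1)$ that the two-vertex-graph recipe of \S 3 would force. Note that in the paper's example the lattice $H$ itself \emph{is} the Laplacian lattice of the two-vertex graph with $p=4$; what fails to be graph-derived is the pair $(\kappa,\mathcal{N})$, so the paper is really proving that the \emph{triple} does not arise from the graph construction (the example is literally a translate of the graph example, cf.\ Figure~\ref{fig3}). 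Your argument instead isolates the sharper structural point --- the proof of Theorem~\ref{RR} never uses $H$ at all, only the symmetry condition and the degree constraints --- and then makes $H$ itself non-realizable by a rank count. This buys you the literal statement of Theorem~\ref{NonGraph} (an $H$ that is provably not a connected graph Laplacian), a construction of admissible $(\kappa,\mathcal{N})$ over \emph{any} $H$ via $\mathcal{N}=S\cup(\kappa-S)$, and examples in every dimension; what it gives up is the closeness to the graph picture that makes the paper's example a pointed illustration. Two minor cautions: your extreme choices ($H=\{0\}$, $H=V_0$) are rather degenerate, and you should confirm that the minimum defining $\ell$ is attained for your $\mathcal{N}$ (automatic when $S$ is finite, as with $H=\{0\}$; checkable by hand for $H=V_0=\Q\cdot(1,-1)$). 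You are right that the genuinely hard question --- a rank-$(n-1)$ lattice that is not a weighted-graph Laplacian lattice --- is not needed for the statement as written, and the paper sidesteps it by attacking $(\kappa,\mathcal{N})$ rather than $H$.
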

\begin{proof}

Let $n=2$ and choose $H=<(-4,4)>$, $\mathcal{N}=\{ \nu \in G \st \nu \sim (2,-2) \}$ with $\kappa=(0,0)$ and $g=1$.  
If $H$ were generated from a two vertex graph,
using the notation from the previous section we would have $p=4$.
This would require $\kappa=(2,2)$ with $\mathcal{N}$ generated by $\nu_{1}=(3,-1)$.  

Since there is no integer $m$ such that $\kappa=(0,0) = (2,2) + m(-4,4)$ 
(and likewise there is no $m$ such that $\nu_{1} = (2,-2) = (3,-1) + m(-4,4)$),
$H$ cannot be generated from a two vertex graph.

Now suppose that $\nu \in \mathcal{N}$.
Then $\nu = (2,-2) + m(-4,4)$ for some integer $m$, and 
\begin{eqnarray*}
\kappa - \nu &=& (0,0) - (2,-2) - m(-4,4)\\
             &=& (2,-2) -(m-1)(-4,4)
\end{eqnarray*}
thus $\kappa - \nu \in \mathcal{N}$.  

Similarly, if $\kappa - \nu \in \mathcal{N}$,
$\kappa - \nu = (2,-2) + m(-4,4)$ for some integer $m$, and
\begin{eqnarray*}
\nu &=& \kappa -(2,-2)-m(-4,4) \\
    &=& (4m-2,-4m+2) \\
    &=& (2,-2) - (m-1)(-4,4)
\end{eqnarray*}
thus $\nu \in \mathcal{N}$ and $H$, $\kappa$, $\mathcal{N}$ satisfies Theorem~\ref{RR}.    
\end{proof}
\begin{figure}
\includegraphics[width=3in]{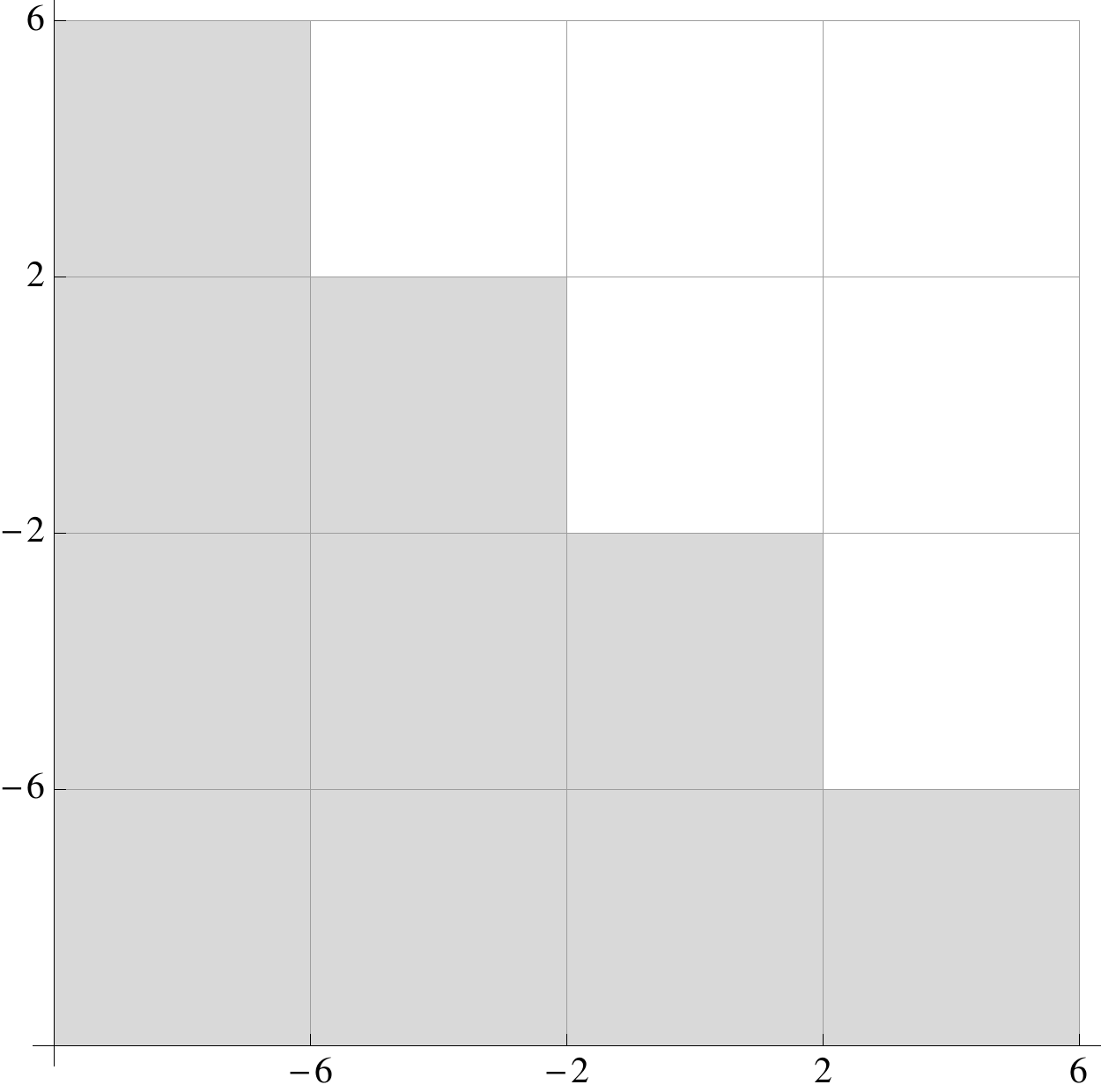}
\caption{Divisors $x \in \R^{2}$ with $\ell(x)=0$ for the non-graph example in the proof of 
Theorem~\ref{NonGraph}.  Note that this example is identical to the two vertex graph example in
Figure~\ref{fig1} with $p=4$, but shifted by $(-1,-1)$.} \label{fig3}
\end{figure}
We include in Figure~\ref{fig3} a representation of the divisors $x \in \R^{2}$ with $\ell(x)=0$ 
for the example used in the proof of Theorem~\ref{NonGraph}.  The plot is identical to that of a two
vertex graph with $p=4$ but is shifted by $-1$ in each direction.   

It is also possible to produce non-graph examples by using more generators for $\mathcal{N}$.  
In Figure~\ref{fig4}, the divisors $x \in \R^{2}$ with $\ell(x)=0$ are shown where 
$\mathcal{N}$ is generated by two points $\nu_{1}=(0,4)$ and $\nu_{2}=(1,3)$, 
using $H=<(-3,3)>$ and $\kappa=(0,0)$.  
\begin{figure}
\includegraphics[width=3in]{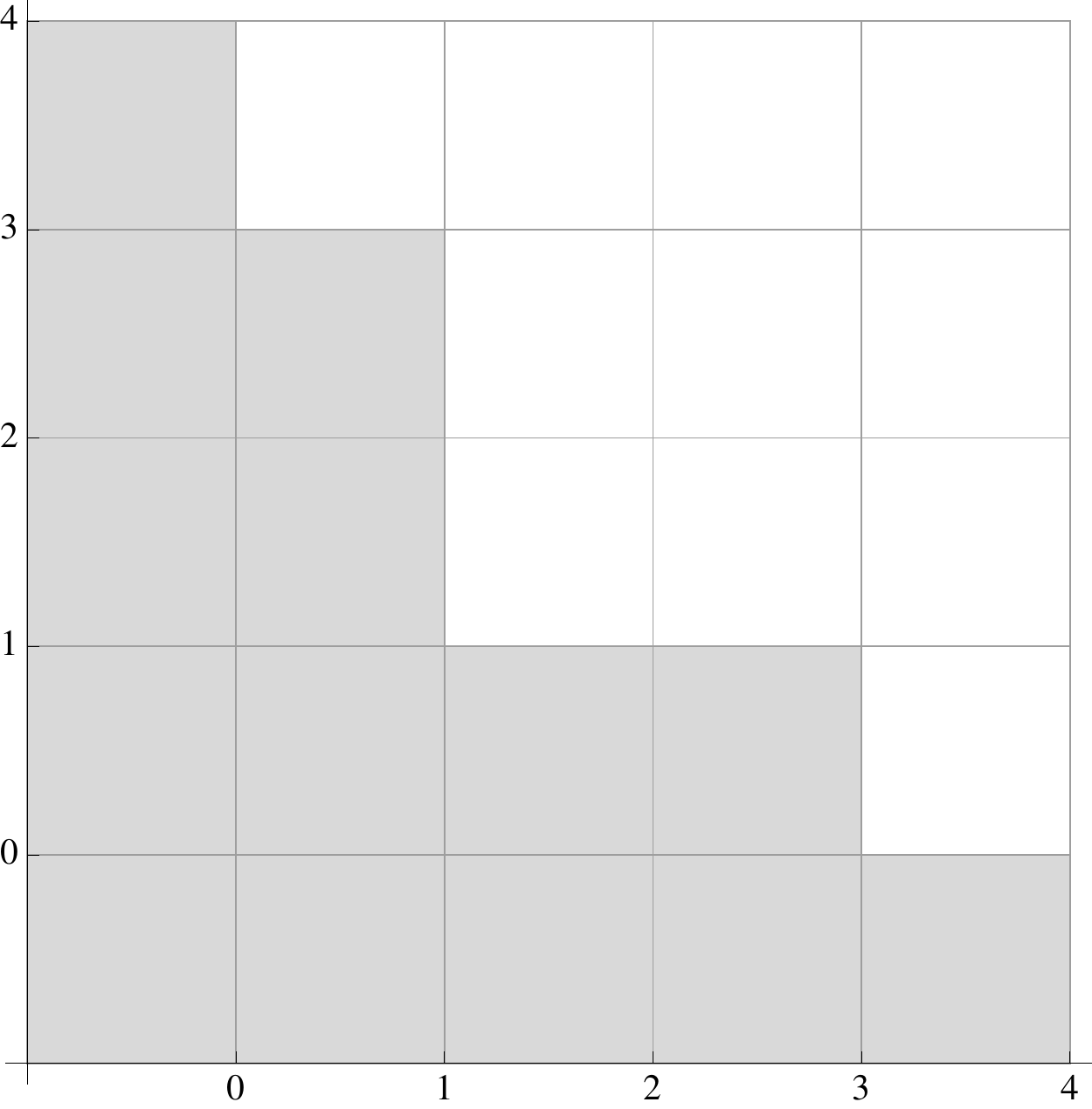}
\caption{Divisors $x \in \R^{2}$ with $\ell(x)=0$ for a non-graph example with $\mathcal{N}$ 
generated by $(0,4)$ and $(1,3)$, $H=<(-3,3)>$ and $\kappa=(0,0)$.} \label{fig4}
\end{figure}


\end{document}